\newtheorem{lemma}{Lemma}
\newtheorem{theorem}{Theorem}
\newcommand{\bX}{\mathbf X}
\newcommand{\e}{{\mathbf E}}
\newcommand{\p}[1]{{\mathbf P}\left(#1\right)}
\newcommand{\pc}[1]{{\mathbf P}(#1)}
\newcommand{\R}{{\mathbb R}}
\newcommand{\PROB}{\mathbf{P}}
\newcommand{\EXP}{\e}
\newcommand{\IND}[1]{\mathbf{1}_{\{ #1 \}}}
\newcommand{\defeq}{\stackrel{\rm def}{=}}
\newcommand{\cel}[1]{\ensuremath{\lceil #1 \rceil}}
\newcommand{\noi}{\noindent}
\date{\today}
\title{Connectivity of sparse Bluetooth networks}
\author{N.~Broutin\thanks{Projet RAP, Inria Paris-Rocquencourt} \and L.~Devroye\thanks{McGill University} \and G.~Lugosi\thanks{ICREA and Pompeu Fabra University. GL acknowledges support by the Spanish Ministry of Science and Technology 
grant MTM2012-37195.}}
\begin{document}


\maketitle

\begin{abstract}
Consider a random geometric graph defined on $n$ vertices uniformly distributed
in the $d$-dimensional unit torus. Two vertices are connected if their 
distance is less than a ``visibility radius'' $r_n$.
We consider {\sl Bluetooth networks} that are locally sparsified random geometric graphs. Each vertex selects $c$ of its neighbors in the random geometric
graph at random and connects only to the selected points. 
 We show that if the visibility radius is at least of the order of $n^{-(1-\delta)/d}$
for some $\delta > 0$, then a constant value of $c$ is sufficient for
the graph to be connected, with high probability. It suffices to take
$c \ge \sqrt{(1+\epsilon)/\delta} + K$ for any positive $\epsilon$ where $K$
is a constant depending on $d$ only. On the other hand, with $c\le \sqrt{(1-\epsilon)/\delta}$,
the graph is disconnected, with high probability.
\end{abstract}


\section{Introduction and results}

Consider the following model of random ``Bluetooth networks''.
Let $\bX=X_1,\ldots,X_n$ be independent, uniformly distributed random points in $[0,1]^d$, and denote the set of these points by $\bX=\{X_1,\ldots,X_n\}$.
Given a positive number $r_n>0$ (the so-called visibility radius),
 define the random geometric graph $G_n(r_n)$ with 
vertex set
$\bX$ in which two vertices  $X_i$ and $X_j$ are connected by an edge
if and only if $D(X_i,X_j)\le r_n$, where
$$D(x,y) = \left(\sum_{i=1}^d \min(|x_i-y_i|, 1-|x_i-y_i|)^2 \right)^{1/2}$$
is the Euclidean distance on the torus.

It is well known (\citet{Pen03}) that, for any $\epsilon>0$,
$$
\lim_{n\to\infty}
\p{G_n(r_n) \text{~is connected}} 
=
\left\{
\begin{array}{l l}
0 & \text{~if~} r_n\le (1-\epsilon)\left(\frac{\log n}{nv_d }\right)^{1/d}\\
1 & \text{~if~} r_n\ge (1+\epsilon)\left(\frac{\log n}{nv_d}\right)^{1/d},
\end{array}
\right.
$$
where $v_d$ is the volume of the 
Euclidean unit ball in $\R^d$.

Note that in order to guarantee that a random geometric graph is
connected (with high probability), the average degree in the graph
needs to be at least of the order of $\log n$, which makes the graph
too dense for some applications.
To deal with this issue, one may sparsify the graph. 
This can be done in a distributed way by selecting, for each vertex $X_i$,
randomly,  and independently a subset of $c_n$ edges adjacent to $X_i$, 
and then considering the subgraph containing the selected edges only. 
The selection is done without replacement and if a vertex has less than
$c_n$ neighbors in $G_n(r_n)$, then we take all of its neighbors.
The obtained random graph model, coined {\sl Bluetooth network} (or
{\sl irrigation graph})
 has been 
introduced and studied in \citet{FeMaPaPe04,DuJoHaPaSo05,DuHaMaPaPe07,CrNoPiPu09}, and \cite{PePiPu09}.

Formally, the random Bluetooth graph
$\Gamma_n=\Gamma_n(r_n,c_n)$ is obtained 
as a random sub-graph of $G_n(r_n)$ as follows. 
For every vertex $X_i\in \bX$, 
we pick randomly, without replacement,
$c_n$ edges, each adjacent to $X_i$ in $G_n(r_n)$. 
(If the degree of $X_i$ in $G_n$ is less than $c_n$, all edges adjacent to
$X_i$ are kept in $\Gamma_n$.)
We also denote by $\Gamma_n^+(r_n,c_n)$ the directed graph
obtained by placing a directed edge from $X_i$ to $X_j$ whenever $X_j$
is among the $c_n$ selected neighbors of $X_i$.

We study connectivity of $\Gamma_n(r_n,c_n)$ 
for large values of $n$. A property of the
graph holds {\sl with high probability (whp)} when
the probability that the property holds converges to one as $n\to \infty$.

When $r_n>\sqrt{d}/2$, the underlying random geometric graph $G_n(r_n)$
is the complete graph and $\Gamma_n(r_n,2)$ becomes the ``$2$-out'' 
random subgraph of the complete graph studied in \citet{FeFr1982a},
where it is shown that the graph is connected with high probability.
\citet{DuJoHaPaSo05} extended this result by showing that when 
$r_n=r>0$ is independent of $n$,
$\Gamma_n(r,2)$ is connected with high probability. 
When $r_n\to 0$ as $n\to\infty$, \citet{CrNoPiPu09}
proved that there exist constants $\gamma_1,\gamma_2$ such that
if $r_n \ge \gamma_1 (\log n/n)^{1/d}$ and $c_n \ge \gamma_2 \log(1/r_n)$,
then $\Gamma_n(r_n,c_n)$ is connected with high probability. 
\citet{BrDeFrLu2011a} 
 proved that when $r_n$ is just above the connectivity threshold for the underlying 
graph $G_n(r_n)$, that is, when $r_n\sim \gamma (\log n/n)^{1/d}$ 
for some sufficiently large $\gamma$, the connectivity threshold for the irrigation graphs is
$$c_n^\star:=\sqrt{\frac{2\log n}{\log\log n}}.$$
More precisely, for any $\epsilon \in (0,1)$, 
\begin{equation}\label{eq:threshold_conn}
\lim_{n\to\infty}\p{\Gamma_n(r_n,c_n)~\text{is connected}}
=
\left\{
\begin{array}{l l}
0 & \text{if~} c_n \le (1-\epsilon) c_n^\star\\
1 & \text{if~} c_n \ge (1+\epsilon) c_n^\star.
\end{array}
\right.
\end{equation}

This result shows that the simple distributed algorithm building the irrigation 
graph guarantees connectivity with high probability while reducing the
average degree to about $2 c_n^\star$, 
which is much less than the  average degree of $\Theta(\log n)$ for the initial random geometric graph. 
However, the average degree still grows with $n$, and therefore 
$\Gamma_n(r_n,c_n)$ is not genuinely sparse. 

\medskip
\noi\textsc{main results.}\ 
The main result of this note is that at the price of slightly increasing
the visibility radius to $r_n\sim n^{-(1-\delta)/d}$ for some $\delta >0$,
a constant number of connections per vertex suffices to achieve connectivity
with high probability.
The lower bound of \citet{BrDeFrLu2011a} states that for any 
$\epsilon \in (0,1)$ and $\lambda \in [1,\infty]$, if $\gamma>0$ is a sufficiently
large constant,
$r_n \ge \gamma \left(\frac{\log n}{n}\right)^{1/d}$, 
$\frac{\log nr_n^d}{\log\log n} \to \lambda$, and
\[
c_n\le \sqrt{(1-\epsilon)\left(\frac{\lambda}{\lambda-1/2}\right)\frac{\log n}{\log nr_n^d}},
\] 
then $\Gamma_n(r_n,c_n)$ contains an isolated $(c_n+1)$-clique
and therefore is disconnected whp. 

When $r_n\sim n^{-(1-\delta)/d}$, we have $\lambda=\infty$ and therefore
if $c_n \le \lfloor(1-\epsilon)/\sqrt{\delta}\rfloor$, then the random
graph $\Gamma_n(r_n,c_n)$ is disconnected whp.  This
bound may seem weak since this value of $c_n$ is just a constant, independent of
$n$.  However, we show here that this bound is essentially tight: We
prove that when $r_n= \Omega(n^{-(1-\delta)/d})$, for some $\delta>0$,
then $\Gamma_n(r_n,c_n)$ is connected whp whenever
$c_n$ is larger than a certain constant, which depends on $\delta$ and
$d$ only. 

\begin{theorem}\label{thm:connect}
Let $\delta \in (0,1)$, $\gamma >0$, and $\epsilon \in (0,1)$ be fixed.
Suppose that $r_n=\gamma_n n^{-(1-\delta)/d}$ where $\gamma_n/\gamma\to 1$
as $n \to \infty$. There exists a constant
$c$, depending on $\delta$ and $d$ only, such that the random Bluetooth graph
$\Gamma(r_n,c)$ is connected whp.
For $x\in(0,1)$ set 
\[
f(x):=\lceil\sqrt{(1+x^2+8\sqrt x+\epsilon)/(x- 2 x^2\log_2(1/x)}\rceil~.
\]
One may take $c=k_1+k_2+k_3+1$, where
\[
k_1=
\left\{
\begin{array}{l l}
\lceil f(\delta)\rceil & \text{if~}\delta\in(0,1/5)\\
\lceil f(1/5) \rceil & \text{if~}\delta\in(1/5,1)
\end{array}
\right.,
\quad
k_2=\left\lceil 
   \frac{8(1+\epsilon) v_d(2\sqrt{d})^d}{(1-\epsilon)}\right\rceil~,
\quad
k_3=\lceil \sqrt{4(1+\epsilon)v_d/\alpha_d}\rceil
\]
where $v_d$
is the volume of the Euclidean ball of radius $1$ in $\R^d$
and $\alpha_d=(1-\epsilon)/(2(2\sqrt{d})^d)$.
\end{theorem}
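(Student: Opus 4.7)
The approach is to tessellate the torus into axis-aligned cubes of side $s_n := r_n/(2\sqrt{d})$, chosen so that any two points in the same or $L^\infty$-adjacent cubes are within Euclidean distance $r_n$. Each cube has expected content $\mu = n s_n^d \asymp n^{\delta}$, and a Chernoff bound combined with a union bound over the $O(n^{1-\delta})$ cubes gives, whp, $(1\pm\epsilon)\mu$ points in every cube. In particular, on this ``good tessellation'' event, every vertex has $N = \Theta(n r_n^d) \asymp n^{\delta}$ potential neighbors in $G_n(r_n)$, all lying in a $3^d$-cube neighborhood. I would then split the budget $c = k_1 + k_2 + k_3 + 1$ into three disjoint pools of Bluetooth selections, each handling a separate obstruction.

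The $k_1$ pool is used to rule out small \emph{trapped} sets. Call $S \subseteq \bX$ trapped if every vertex of $S$ selects all of its outgoing Bluetooth edges inside $S$. For a fixed $S$ with $|S|=s > c$, this occurs with probability at most $\bigl(\binom{s-1}{c}/\binom{N}{c}\bigr)^{s} \le (es/N)^{cs}$, while the number of $s$-subsets of diameter at most $r_n$ is $O(n N^{s-1})$. Multiplying and summing yields a union bound of order $n^{1-\delta((c-1)s+1)}$, which is worst at $s=c+1$, where it becomes $n^{1-\delta c^2}$. This vanishes precisely when $c > 1/\sqrt{\delta}$, which is the threshold encoded by $f(\delta)$; the $x^2$ and $\sqrt{x}$ terms inside $f$ absorb the sub-leading combinatorics, and the cap at $\delta = 1/5$ avoids the regime where $x - 2x^2\log_2(1/x)$ becomes nonpositive. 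I expect this step to be the main technical obstacle, since it is here that one must match the $\sqrt{1/\delta}$ lower bound from \citet{BrDeFrLu2011a}.

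The $k_2$ pool secures a \emph{local giant} inside every $r_n$-ball. Any block of $(2\sqrt{d})^d$ adjacent tessellation cubes fits inside an $r_n$-ball and therefore induces a complete subgraph of $G_n(r_n)$ on $\Theta(n^{\delta})$ vertices, of which the Bluetooth restriction is a $k_2$-out random subgraph. A Fenner--Frieze type analysis then produces a single local component absorbing all but a vanishing fraction of the local vertices, with failure probability small enough to survive the union bound over the $O(n^{1-\delta})$ blocks; the factor $v_d(2\sqrt{d})^d$ in $k_2$ is exactly what calibrates this Chernoff-type estimate. Finally, the $k_3$ pool is spent on \emph{stitching}: a first-moment calculation over adjacent pairs of cubes shows that, whp, at least one Bluetooth edge crosses every such boundary, merging the local giants into a single spanning component. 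Combined with the absence of trapped sets, every vertex then lies in the same component, which proves connectivity with high probability. A subtlety to handle carefully is that the three pools must be drawn \emph{independently} of each other (and, conditionally, of the point configuration), so that the three analyses above can be carried out separately and then intersected.
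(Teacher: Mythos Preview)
Your plan diverges substantially from the paper's argument. The paper does not use a trapped-set union bound for the $k_1$ pool at all; that counting is the mechanism behind the \emph{lower} bound of \citet{BrDeFrLu2011a}. For the upper bound the paper instead runs a \emph{forward branching exploration} from a single fixed vertex $X_1$: using only the $k_1$ selections for $\ell \approx \min\{\delta^2,1/25\}\log_2 n$ generations, it shows that the explored set at least doubles at each generation with total failure probability $o(1/n)$, so that by pigeonhole some grid cell ends up holding $n^{\Theta(\delta^2)}$ vertices of the component of $X_1$. The $k_2$ pool is then spent \emph{inside that one cell}, boosting the in-cell count to a positive fraction $\alpha_d n r_n^d$ by a second geometric growth; the $k_3$ pool pushes this positive density from cell to adjacent cell along a Hamiltonian enumeration of the grid. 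All three phases succeed with probability $1-o(1/n)$, so a union bound over the $n$ possible starting vertices shows that the component of \emph{every} $X_i$ is dense in every cell; the residual ``$+1$'' edge then merges any two such dense components.

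Your structural route, by contrast, has a genuine gap where the two halves are supposed to meet. First, the Fenner--Frieze step does not deliver what you claim: the subgraph induced on a block of $(2\sqrt d)^d$ cells is \emph{not} a $k_2$-out graph, because each vertex draws its $k_2$ selections from its entire $r_n$-ball, which strictly contains the block. What you actually obtain is a graph in which each vertex has a $\mathrm{Binomial}(k_2,p)$ number of in-block out-edges with $p\asymp (v_d(2\sqrt d)^d)^{-1}$; this yields a local giant, not local connectivity, and it is not clear the failure probability is $o(n^{-(1-\delta)})$ as your union bound over blocks requires. Second, and more seriously, the trapped-set bound and the stitched giant do not overlap. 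Your first-moment sum $\sum_s nN^{s-1}(es/N)^{k_1 s}$ is only small while $s \lesssim n^{(k_1-1)\delta/k_1} \ll N$, since beyond that the $(es)^{k_1 s}$ factor dominates; meanwhile the stitched giant misses $o(N)$ vertices per block. Nothing in your outline excludes a non-giant component of intermediate size---say of order $N^{1/2}$---that threads through many blocks without ever being locally large. (A smaller issue: your count of candidate trapped sets assumes diameter at most $r_n$, which is only forced at $s=k_1+1$; for larger $s$ one must enumerate $G_n$-connected subsets, which costs $n(eN)^{s}$ rather than $nN^{s-1}$.) The paper sidesteps this intermediate-scale problem entirely by never attempting a global decomposition: it grows one component until it is dense everywhere, and the $o(1/n)$ per-vertex failure probability is exactly what the choice of $k_1=f(\delta)$ is calibrated to produce.
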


Furthermore, as $\delta$ becomes small, our upper bound for the constant
depends on $\delta$ as $((1+\epsilon)/\delta)^{1/2}+O(1)$, essentially matching the lower 
bound of \citet{BrDeFrLu2011a}  mentioned above. 
A straightforward combination of the lower bound and Theorem \ref{thm:connect}
implies the following:

\begin{theorem}\label{thm:small_delta}
Let $\overline{c}(\delta)$ denote the smallest integer $c$ for which 
$$\lim_{n\to\infty} \p{\Gamma_n(r_n,c)\text{ is connected}}=1$$  
and 
let $\underline{c}(\delta)$ denote the largest integer $c$ for which 
$$\lim_{n\to\infty} \p{\Gamma_n(r_n,c)\text{ is disconnected}}=1$$
when $r_n=\gamma_n n^{-(1-\delta)/d}$ with $\gamma_n\sim \gamma$ as 
$n\to\infty$. Then, for any $\epsilon>0$, we have, for all $\delta$ small enough,
$$(1-\epsilon) \delta^{-1/2} 
\le \underline{c}(\delta) \le \overline{c}(\delta) \le (1+\epsilon)\delta^{-1/2}.$$
\end{theorem}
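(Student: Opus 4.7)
My plan is to combine the two existing bounds—the \citet{BrDeFrLu2011a} lower bound quoted in the paragraph preceding Theorem~\ref{thm:connect} and the explicit upper bound of Theorem~\ref{thm:connect}—and let the internal parameter of each shrink appropriately as $\delta\to 0$. The middle inequality $\underline{c}(\delta)\le\overline{c}(\delta)$ is immediate from the definitions, since for a fixed integer $c$ the connectivity probability of $\Gamma_n(r_n,c)$ cannot tend to both $0$ and $1$; so only the two outer inequalities require work.

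For the lower bound on $\underline{c}(\delta)$, I would substitute $r_n=\gamma_n n^{-(1-\delta)/d}$ to obtain $nr_n^d=\gamma_n^d n^\delta$ and hence $\log(nr_n^d)=\delta\log n+O(1)$. This yields $\lambda=\log(nr_n^d)/\log\log n\to\infty$, $\lambda/(\lambda-1/2)\to 1$, and $\log n/\log(nr_n^d)\to 1/\delta$. Consequently the \citet{BrDeFrLu2011a} threshold $\sqrt{(1-\epsilon')\lambda(\lambda-1/2)^{-1}\log n/\log(nr_n^d)}$ tends to $\sqrt{(1-\epsilon')/\delta}$. Given a target $\epsilon\in(0,1)$, I would pick an auxiliary $\epsilon'>0$ with $(1-\epsilon)^2<1-\epsilon'$; then, for $\delta$ small enough, the fixed integer $c=\lfloor(1-\epsilon)\delta^{-1/2}\rfloor$ satisfies the \citet{BrDeFrLu2011a} hypothesis for all sufficiently large $n$, so $\Gamma_n(r_n,c)$ contains an isolated $(c+1)$-clique and is disconnected whp, giving $\underline{c}(\delta)\ge(1-\epsilon)\delta^{-1/2}$.

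For the upper bound on $\overline{c}(\delta)$, I would examine each summand of $c=k_1+k_2+k_3+1$ from Theorem~\ref{thm:connect} as $\delta\to 0$. In the regime $\delta\in(0,1/5)$, $k_1=\lceil f(\delta)\rceil$, and the expression under the square root in $f$ has numerator tending to $1+\epsilon'$ and denominator $\delta-2\delta^2\log_2(1/\delta)=\delta(1+o(1))$; hence $k_1=\sqrt{(1+\epsilon')/\delta}\,(1+o(1))$. The other two summands $k_2$ and $k_3$ depend only on $d$ and the auxiliary $\epsilon'$, so they are $O(1)$ uniformly in $\delta$. Given the target $\epsilon$, I would choose $\epsilon'$ with $\sqrt{1+\epsilon'}<1+\epsilon/2$ and then take $\delta$ small enough so that the additive constants $k_2+k_3+1$ fit inside $(\epsilon/2)\delta^{-1/2}$; this yields $\overline{c}(\delta)\le(1+\epsilon)\delta^{-1/2}$.

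I do not expect any real obstacle beyond this bookkeeping. The only care required is to pick the two copies of the auxiliary $\epsilon'$ (the one appearing in the \citet{BrDeFrLu2011a} lower bound and the one in Theorem~\ref{thm:connect}) small compared to the target $\epsilon$, and then to take $\delta$ small enough that the $O(1)$ additive corrections are swallowed by the leading $\delta^{-1/2}$ term. This is exactly what the paper has in mind when it describes the result as a \emph{straightforward combination} of the lower bound and Theorem~\ref{thm:connect}.
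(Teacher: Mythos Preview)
Your proposal is correct and follows exactly the approach the paper intends: the paper gives no explicit proof of Theorem~\ref{thm:small_delta}, stating only that it is ``a straightforward combination of the lower bound and Theorem~\ref{thm:connect}'', and your argument is precisely that combination---using the \citet{BrDeFrLu2011a} clique lower bound (with $\lambda=\infty$) for $\underline c(\delta)$ and the explicit formula $c=k_1+k_2+k_3+1$ together with the asymptotics $k_1=\sqrt{(1+\epsilon')/\delta}\,(1+o(1))$, $k_2+k_3+1=O(1)$ for $\overline c(\delta)$. The bookkeeping with the auxiliary $\epsilon'$ and the absorption of the $O(1)$ terms into $(\epsilon/2)\delta^{-1/2}$ for small $\delta$ is exactly what is needed.
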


Interestingly, the threshold is essentially independent of the value of $\gamma$
and the dimension $d$. This phenomenon was already observed in (\ref{eq:threshold_conn}).

\medskip
\noi\textsc{remarks and open questions.}\ 
Before we conclude this section, we mention a few questions that might be worth 
investigating. Theorem~\ref{thm:small_delta} above finds the correct asymptotics 
for the thresholds $\underline{c}(\delta),\overline{c}(\delta)$ when the visibility radius is 
$r_n\sim \gamma n^{-(1-\delta)/d}$ for small $\delta$. 
One may phrase a related question as follows: 
given a fixed a constant $c$, (a budget, in some sense) can one 
find a threshold function $r_n^\star=r_n^\star(c)$ for connectivity? 
More precisely, $r_n^\star$ 
should be such that, for any $\epsilon>0$,
$$\lim_{n\to\infty }\p{\Gamma_n(r_n, c)\text{ is connected}} 
= \left\{
\begin{array}{l l}
0 & \text{if } r_n<(1-\epsilon) r_n^\star\\
1 & \text{if } r_n>(1+\epsilon) r_n^\star.
\end{array}
\right.
$$
In some sense, Theorem~\ref{thm:small_delta} gives the asymptotics of the threshold
function for $c\to\infty$, but one would like to know the threshold for fixed values of 
the budget $c$. As it was proved in \cite{BrDeFrLu2011a} in the case that 
$r_n\sim \gamma (n^{-1}\log n)^{1/d}$, 
the main obstacles to connectivity should be isolated 
$(c+1)$-cliques. One could also try to prove that this is indeed the case at a finer 
level: around the threshold, the number of isolated $(c+1)$-cliques should be 
asymptotically Poisson distributed. Thus, one expects that the probability that 
$\Gamma_n(r_n,c)$ is connected should have asymptotics similar to that of 
classical random graphs \cite[][Theorem 7.3]{Bollobas2001} or  random 
geometric graphs \cite[][Theorem~13.11]{Pen03}, where the isolated vertices are the 
main obstacle.
Finally, we mention that elsewhere (\citet{BrDeLu2014b}) we investigate the birth of the 
giant component of $\Gamma(r_n,c)$.

\section{Proof sketch}

In the course of the proof, we  condition on the location of the points and assume 
that they are sufficiently regularly distributed. The probability that this happens is
estimated in the following lemma.
Let $N(A)=\sum_{i=1}^n \IND{X_i\in A}$ denote the number of points
falling in a set $A\subset [0,1]^d$ and let $\lambda$ denote the Lebesgue
measure. Denote by $B_{x,r}= \{y\in [0,1]^d: D(x,y) < r\}$ the open ball
centered at $x\in [0,1]^d$ and by 
$C_{x,r}= \{y: \forall i=1,\ldots,n, \min(|x_i-y_i|, 1-|x_i-y_i|) \le r/2\}$ 
the cube of side length $r$ centered at $x\in [0,1]^d$.

\begin{lemma}
\label{lem:regularity}
Suppose $r_n \ge \gamma n^{-(1-\delta)/d}$ for some
$\delta \in (0,1)$ and $\gamma >0$. 
Let $\epsilon>0$ and denote by $F$ the event that
for all $x\in [0,1]^d$,
\[
   \frac{N(B_{x,r_n})}{n\lambda(B_{x,r_n})} \in (1-\epsilon,1+\epsilon)
  \quad \text{and} \quad    
   \frac{N(C_{x,r_n/(2\sqrt{d})})}{n\lambda(C_{x,r_n/(2\sqrt{d})})} \in (1-\epsilon,1+\epsilon)~.
\]
Then there exists a constant $\theta =\theta(\delta,\epsilon)>0$ 
and a positive integer $n_0=n_0(\delta,\epsilon)$
such that
for all $n> n_0$,
$\pc{F}\ge 1-\exp(-\theta n r_n^d)$.
\end{lemma}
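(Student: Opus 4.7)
The plan is to reduce the uniform-in-$x$ statement to a single-set Chernoff bound by a net argument, exploiting the fact that $nr_n^d \ge \gamma^d n^\delta$ grows polynomially in $n$. For any fixed Borel set $A\subseteq [0,1]^d$, $N(A)$ is binomial with parameters $(n,\lambda(A))$, so the multiplicative Chernoff bound gives
\[
   \p{|N(A)-n\lambda(A)|>(\epsilon/2)\, n\lambda(A)}\le 2\exp\bigl(-c\,\epsilon^2\, n\lambda(A)\bigr)
\]
for an absolute constant $c>0$. On the torus with $r_n<1/2$, $\lambda(B_{x,r_n})=v_d r_n^d$ and $\lambda(C_{x,r_n/(2\sqrt d)})=(r_n/(2\sqrt d))^d$, so in each case $n\lambda(A)\gtrsim n r_n^d$, and the single-set bound decays faster than any polynomial in $n$ once $n$ is large.

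\noindent To promote this to a statement uniform in $x$, I would place a cubic grid $\mathcal N\subset[0,1]^d$ of spacing $\eta=r_n/n^2$, of cardinality $|\mathcal N|\le(n^2/r_n)^d=O(n^{3d})$. For any $x\in[0,1]^d$, the nearest grid point $\tilde x$ satisfies $D(x,\tilde x)\le \eta\sqrt d/2$, giving the monotone sandwich
\[
   B_{\tilde x,\,r_n-\eta\sqrt d/2}\ \subseteq\ B_{x,r_n}\ \subseteq\ B_{\tilde x,\,r_n+\eta\sqrt d/2},
\]
and an analogous two-sided inclusion for the cubes $C_{x,\,r_n/(2\sqrt d)}$ sandwiched between $C_{\tilde x,\,r_n/(2\sqrt d)\pm\eta}$. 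Because $\eta/r_n=1/n^2\to 0$, the Lebesgue measures of the outer and inner objects differ from $\lambda(B_{x,r_n})$ (respectively $\lambda(C_{x,r_n/(2\sqrt d)})$) by a factor $1+O(1/n^2)$ uniformly in $x$; for $n$ sufficiently large this discrepancy is absorbed into an additional multiplicative slack of $\epsilon/2$.

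\noindent The next step is to apply the single-set Chernoff bound at tolerance $\epsilon/2$ to each of the polynomially many triples $(\tilde x, s)$ with $\tilde x\in\mathcal N$ and $s$ ranging over the finitely many perturbed radii/side-lengths used in the sandwich, and then take the union bound. The failure probability is at most
\[
   O(n^{3d})\cdot\exp\!\bigl(-c(\epsilon/2)^2\, n\alpha_d r_n^d\bigr),
\]
where $\alpha_d$ is the smallest of the relevant volume constants. Since $nr_n^d\ge \gamma^d n^\delta$, we have $\log n=o(nr_n^d)$, so the polynomial prefactor $n^{3d}$ is swallowed by halving the exponent, giving a bound of the form $\exp(-\theta\, n r_n^d)$ for a suitable $\theta=\theta(\delta,\epsilon)>0$ and all $n\ge n_0(\delta,\epsilon)$. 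Combining this with the sandwich inclusions and the volume comparison then yields the required uniform control of the density ratios for all $x\in[0,1]^d$ simultaneously.

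\noindent\textbf{Main obstacle.} The argument is mostly bookkeeping: tracking how the $\eta$-perturbations propagate through the volume and Chernoff estimates, and checking that the union-bound overhead is genuinely subexponential compared with $n r_n^d$. The latter is what makes the regime $r_n\ge \gamma n^{-(1-\delta)/d}$ essential here---at scale $(\log n/n)^{1/d}$ one would have $n r_n^d=\Theta(\log n)$, and the polynomial-in-$n$ net overhead could not be absorbed as cleanly. The slack coming from the assumption $\delta>0$ is precisely what is reflected in the dependence $\theta=\theta(\delta,\epsilon)$ and $n_0=n_0(\delta,\epsilon)$.
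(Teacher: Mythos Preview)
Your argument is correct. The net-plus-Chernoff approach you outline is sound: the sandwich inclusions are valid, the volume perturbations are $1+O(n^{-2})$ uniformly, the Chernoff exponent for each fixed set is of order $\epsilon^2 n r_n^d$, and since $n r_n^d \ge \gamma^d n^\delta$ the polynomial-in-$n$ union-bound overhead from the $O(n^{3d})$ grid points is indeed $o(n r_n^d)$ and can be absorbed into the constant $\theta$. The only cosmetic point is that in the cube sandwich the coordinate shift is bounded by $\eta\sqrt d/2$ per coordinate, so the side-length perturbation should be $\pm\eta\sqrt d$ rather than $\pm\eta$; this changes nothing.

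The paper itself takes a different route: it simply invokes a uniform relative-deviation inequality for empirical measures over VC classes (balls and axis-aligned cubes each have finite VC dimension), citing a result from \cite{BoBoLu2004}. That black-box approach is shorter and yields uniformity over all radii at once, but it presupposes familiarity with VC-type concentration. Your approach is more elementary and self-contained, and it makes transparent exactly where the hypothesis $\delta>0$ enters---namely in beating the polynomial net cardinality with the exponential term---which is the same mechanism that the VC bound would also ultimately rely on in this regime. Both proofs arrive at the same form $\exp(-\theta\, n r_n^d)$ for essentially the same reason.
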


\begin{proof}
The lemma may be proved by standard arguments. It follows, for example,
from inequalities for uniform deviations of empirical measures over 
{\sc vc} classes such as Theorem~7 in \cite{BoBoLu2004}.
\end{proof}

%
%

In the rest of the proof, we fix $\bX=\{X_1,\ldots,X_n\}$, and
assume that event $F$ holds. Thus, all randomness originates from
the random choices of the $c$ neighbors of every vertex. 
We denote by $\PROB_c$ the probability with respect to the random
choice of the neighbors only (i.e., conditional given the set $\bX$).
It suffices to show that if $F$ holds, then with high
probability (with respect to $\PROB_c$), $\Gamma(r_n,c)$ is connected.

\medskip
\noi\textsc{sketch of the proof.}\
The general strategy is to prove that, with high probability,
from any two points $X_i$, $X_j$, one can find a path that connects 
$X_i$ to $X_j$. To do this, most of the work consists in proving structural properties 
of the connected component containing a fixed point $X_1$. 
We study the random graph by dividing the $c$ neighbors into 
four disjoint groups of sizes $k_1,k_2$, $k_3$, and $1$, thus obtaining
four independent sets of edges added in four different phases.
The sketch of the proof of Theorem~\ref{thm:connect} is as follows. 
We rely on a discretization of the unit cube $[0,1]^d$ into congruent 
cubes of side length $1/\cel{2r_n\sqrt d}$. 

\begin{itemize}
\item \textsc{searching for a dense cube I.} (Section~\ref{sec:first-growth}).\ 
In the first phase we start from an arbitrary vertex, say, $X_1$, and
using only $k_1$ choices of each vertex, consider the set of the vertices
of $\bX$ which may be reached using paths of at most 
$\ell \approx \delta^2 \log_2 n$ edges. 
We show that if this growth process succeeds, there exists a cube in the grid 
partition that contains a connected component of size at least
$n^{\min\{\delta^2,1/25\}/3}$. 

\item \textsc{searching for a dense cube II.} (Section~\ref{sec:second-growth}).\ 
In the second phase we show that by adding $k_2$ new connections
to each vertex in the component obtained after the first step,
at least one of the grid cells has a positive fraction of its points in a single 
connected component. 

\item \textsc{propagating the density.} (Section~\ref{sec:third-growth}).\ 
Once we have a cell containing a constant proportion of points belonging 
to the same connected component, it is rather easy to propagate this positive 
density (of a single connected component) to all other cells of the grid by 
using $k_3$ new connections per vertex. 

\item \textsc{connectivity is unavoidable.} (Section~\ref{sec:final-growth}).\ 
The previous phases guarantee that from a single vertex $X_1$ all three phases
succeed with probability $1-o(1/n)$. So, with high probability,
the connected components of {\sl every} single vertex $X_i$, $1\le i\le n$, 
reach in every corner of the space. Then, it is easy to show that with just 
one additional connection per vertex, any two such components very likely connect, 
proving that the entire graph is, in fact, connected.
\end{itemize}

\section{First growth process: searching for a dense cube I}
\label{sec:first-growth}

Divide $[0,1]^d$ into a grid of congruent cubes of side length
$1/\cel{2\sqrt{d}/r_n}$. We first prove that a constant number of edges per vertex 
suffices to guarantee that there exists a cell that contains at least a polynomial 
number of points of $\bX$ that may be reached from $X_1$. 

\begin{lemma}\label{lem:first}
Suppose $\bX=\{X_1,\ldots,X_n\}$ are such that the event $F$ defined in Lemma \ref{lem:regularity} occurs. 
Let $k_1$ and $r_n$ be defined as in Theorem~\ref{thm:connect}.
With probability at least $1-  o(1/n)$,
the connected component of $\Gamma(r_n,k_1)$ containing $X_1$ is such that
there exists a cell in the grid partition of $[0,1]^d$ into 
congruent cubes of side length $1/\lceil 2\sqrt{d}/r_n\rceil$
that contains 
at least $n^{\min\{\delta^2,1/25\}/3}$ vertices of the component.
\end{lemma}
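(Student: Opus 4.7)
The plan is to perform a breadth-first search (BFS) from $X_1$ using only the $k_1$ out-edges of each vertex reserved for this first phase, in the directed graph $\Gamma_n^+(r_n, k_1)$, for $\ell = \lfloor c_1 \delta^2 \log_2 n\rfloor$ levels, where $c_1$ is a positive constant chosen large enough (depending on $\delta$ and $d$) that $c_1 \log_2 k_1 \ge \tfrac13$ in the regime $\delta \le 1/5$, and an analogous cruder inequality for $\delta > 1/5$. Write $Z_t$ for the BFS frontier at level $t$ and $T_t = \sum_{s\le t}|Z_s|$. The aim is to show that, with probability $1-o(1/n)$, $T_\ell \ge \tfrac12 k_1^\ell$. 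Since the BFS is confined to the Euclidean ball $B_{X_1,\ell r_n}$, which intersects at most $v_d(2\sqrt d)^d \ell^d = \mathrm{polylog}(n)$ cells of the grid of side $1/\lceil 2\sqrt d/r_n\rceil$, pigeonhole then produces a single cell containing at least $n^{\min\{\delta^2,1/25\}/3}$ BFS vertices, as required.

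The BFS growth itself is a near-standard branching analysis. On the event $F$, every vertex of $\bX$ has degree at least $(1-\epsilon)v_d n r_n^d = \Omega(n^\delta)$ in $G_n(r_n)$, so the $k_1$ out-edges of each vertex are uniform draws (without replacement) from a pool of size $\Omega(n^\delta)$. Exposing edges one BFS level at a time, the expected number of ``wasted'' out-edges at level $t+1$---those colliding with the already-explored set $T_t$ or duplicating each other in the new level---is $O\bigl((k_1 T_t + k_1^2|Z_t|^2)/n^\delta\bigr)$, which is $o(k_1|Z_t|)$ throughout, since the target $T_\ell = n^{\Omega(\delta^2)}$ sits polynomially below $n^\delta$. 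A Hoeffding-type concentration inequality applied at each level, combined with a union bound over the $\ell = \mathrm{polylog}(n)$ levels, then delivers $T_\ell \ge \tfrac12 k_1^\ell$ with failure probability $e^{-\Omega(k_1)}$, conditional on the BFS not stalling in its very low-$|Z_t|$ early phase.

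The main technical obstacle is bounding this \emph{early-extinction probability}: the event that the component of $X_1$ in $\Gamma_n(r_n, k_1)$ is actually small. Following the structure of the lower bound in \cite{BrDeFrLu2011a}, the dominant obstruction is an isolated $(k_1+1)$-clique containing $X_1$: there are of order $n^{\delta k_1}$ candidate cliques (using the $\sim n^\delta$ density of points within $r_n/2$ of $X_1$), and each is isolated in $\Gamma_n$ with probability of order $n^{-\delta k_1(k_1+1)}$ (each of the $k_1+1$ vertices must place all its $k_1$ out-edges inside the clique), contributing $\sim n^{-\delta k_1^2}$ up to combinatorial prefactors. Extending this to a union bound over all small trap shapes on up to $O(k_1\log k_1)$ vertices yields analogous terms whose sum is at most $n^{-\delta k_1^2 + O(\sqrt\delta)}$ once the entropy of shapes and the $\log(k_1!)$ factors are absorbed. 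The form of $f(\delta)=\lceil\sqrt{(1+\delta^2+8\sqrt\delta+\epsilon)/(\delta-2\delta^2\log_2(1/\delta))}\rceil$ is calibrated precisely so that $\delta k_1^2 - 1$ exceeds those corrections by a fixed margin: the $8\sqrt\delta$ in the numerator absorbs $\log(k_1!)$ for $k_1 \sim 1/\sqrt\delta$, and the $2\delta^2\log_2(1/\delta)$ in the denominator absorbs the logarithmic overhead from summing over cluster sizes. This forces the total early-extinction probability to be $o(1/n)$.

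With extinction ruled out, the branching concentration and pigeonhole arguments above complete the proof. The real work is entirely in the combinatorial trap-counting union bound driving the choice of $f(\delta)$; the growth and pigeonhole steps are essentially routine once the probability of collapse into a small component is shown to be $o(1/n)$.
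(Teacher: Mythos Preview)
Your overall architecture (BFS from $X_1$ for $\ell \asymp \delta^2\log_2 n$ levels, then pigeonhole into the $O(\ell^d)$ cells meeting $B_{X_1,\ell r_n}$) matches the paper. But the core probabilistic argument you propose is \emph{not} the one the paper uses, and your version has a real gap.

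\medskip
\textbf{What the paper actually does.} There is no trap-counting and no separate ``early phase'' versus ``late phase''. The paper targets only \emph{doubling} per generation, not near-perfect $k_1$-ary growth: it shows $\PROB_c(|A|\le k_1 2^{\ell-1})=o(1/n)$ where $A$ is the set reached in $\le\ell$ directed steps. The entire argument is a single elementary collision bound applied uniformly at every generation $i=1,\dots,\ell-1$: conditional on having $\ge 2^{i-1}k_1$ vertices at level $i$, those vertices cast $\ge 2^{i-1}k_1^2$ out-edges, and for the next level to have fewer than $2^i k_1$ \emph{new} vertices, at least $2^{i-1}k_1^2-2^i k_1$ of these edges must land in the already-explored set. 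The crux is that the total explored set has size at most $1+k_1+\cdots+k_1^\ell\le k_1^{\ell+1}$, whereas on $F$ each vertex has $\ge (1-\epsilon)v_d n r_n^d\asymp n^\delta$ potential neighbors. Hence each collision costs a factor $\lesssim k_1^\ell/n^\delta$, and the failure probability at level $i$ is at most
\[
\binom{2^{i-1}k_1^2}{2^i k_1}\left(\frac{k_1^\ell}{(1-\epsilon)v_d n r_n^d - k_1^\ell}\right)^{2^{i-1}k_1^2-2^i k_1}.
\]
The worst generation is $i=1$, where the exponent is $k_1^2-2k_1=k_1^2(1-2/k_1)$. With $\ell=\lfloor\min\{\delta^2,1/25\}\log_2 n\rfloor$ and $\log_2 k_1\le 2\log_2(1/\delta)$ one has $k_1^\ell\le n^{2\delta^2\log_2(1/\delta)}$, so the base is $\lesssim n^{-(\delta-2\delta^2\log_2(1/\delta))}$. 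Summing over $i$ and verifying
\[
\delta^2-k_1^2(1-2/k_1)\bigl(\delta-2\delta^2\log_2(1/\delta)\bigr)\le -1-\epsilon
\]
is exactly what drives the definition of $f$. In particular, the denominator $\delta-2\delta^2\log_2(1/\delta)$ in $f$ is the effective exponent of $k_1^\ell/n^\delta$, and the $8\sqrt\delta$ in the numerator absorbs the $-2/k_1$ loss from requiring only doubling (i.e., the $2k_1$ subtracted from $k_1^2$), not a $\log(k_1!)$ as you suggest.

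\medskip
\textbf{Where your proposal breaks.} Your split into ``early extinction (trap-counting)'' plus ``late concentration'' does not close. Trap-counting bounds $\PROB_c(\text{component of }X_1\text{ has }\le K\text{ vertices})$ for fixed $K$; it does not bound the event that the BFS grows sub-$k_1$-ary for the first several levels while the full component is nonetheless large. Your own concentration step only yields failure probability $e^{-\Omega(|Z_t|)}$, which is not $o(1/n)$ until $|Z_t|$ is polylogarithmic; the intervening $\Theta(\log\log n)$ generations are precisely where the $o(1/n)$ budget is spent, and trap-counting says nothing about them. Moreover, your reading of the constants in $f(\delta)$ (``$\log(k_1!)$'', ``summing over cluster sizes'') does not match what those terms actually compensate for in the paper's calculation. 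The paper's doubling argument handles all generations, including the very first, in one stroke---the exponent $k_1^2-2k_1$ at $i=1$ already beats $1/n$ by the calibrated choice of $k_1$---and this is what you should reproduce.
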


\begin{proof}
Let $\ell< n$ be a positive integer specified below.
Consider $A$, the set of vertices of $\bX$ that can be reached from $X_1$ using 
a directed path of length at most $\ell$ in $\Gamma_n^+(r_n,k_1)$. 
Note that $|A|\le 1+k_1+k_1^2+\cdots +k_1^{\ell}$. We first show 
a lower bound on the size of this connected component:
\begin{align}\label{eq:bound_branching}
\pc{|A|\le k_12^{\ell-1}} = o(1/n).
\end{align}
To see this, the key property is that with high probability, 
the number of new points added in the second generation is at least $2k_1$. 
First, the $k_1$ points of the first generation are distinct for they are 
sampled without replacement. 
For the second generation, imagine $k_1^2$ bins, $k_1$ for each of the
$k_1$ vertices of the first generation, into which we place the points
chosen by these vertices.
For these $k_1^2$ bins to contain only $j$ different points that are 
also different from the points of the first generation, there must
exist $k_1^2-j$ bins that contain only points of the $j$ remaining bins
or points from the first generation. 
There are $\binom{k_1^2}{j}$ ways of choosing these $k_1^2-j$ bins
and for each such bin, the probability that it contains a point either from
the remaining $j$ bins or from points of the first generation is at most
$(1+k_1+k_1^2)/(nr_n^d v_d(1-\epsilon)-1-k_1-k_1^2)$
(since, on the event $F$, each ball of radius $r_n$
contains at least $nr_n^d v_d(1-\epsilon)$ points).
Thus, the probability that
the number of distinct points 
 in the second generation that are distinct and do not belong to the first generation is less than $2k_1$ is at most
\[
 \sum_{j=0}^{2k_1} \binom{k_1^2}{j} 
  \left(\frac{1+k_1+k_1^2}{nr_n^d v_d(1-\epsilon)-1-k_1-k_1^2}\right)^{k_1^2-j}~.
\]

Similarly, assuming that there are at least $2k_1$ points in the second generation, the probability that the number of selected
neighbors in the third generation not selected before is less than
$4k_1$ is at most
\[
 \sum_{j=0}^{4k_1} \binom{2k_1^2}{j} 
  \left(\frac{1+k_1+k_1^2+k_1^3}{nr_n^d v_d(1-\epsilon)-1-k_1-k_1^2-k_1^3}\right)^{2k_1^2-j}~.
\] 
We may continue in this fashion for $\ell-1$ steps, in each step 
doubling the number of neighbors with high probability.
The probability that the $\ell$-th generation has less than $2^{\ell-1}k_1$
vertices is at most
\begin{align}
& \sum_{i=1}^{\ell-1}  \sum_{j=0}^{2^{i-1}k_1} \binom{2^{i-1}k_1^2}{j} 
  \left(\frac{1+k_1+\cdots+k_1^i}{nr_n^d v_d(1-\epsilon)-(1+k_1+\cdots+k_1^i)}\right)^{2^{i-1}k_1^2-j}\notag
\\
& \label{eq:bound_c1}
\quad \le 
 \sum_{i=1}^{\ell-1}  2^{i-1}k_1 2^{2^{i-1}k_1^2} 
  \left(\frac{k_1^{\ell}}{nr_n^d v_d(1-\epsilon)-k_1^{\ell}}\right)^{2^{i-1}k_1^2-2^ik_1}.
\end{align}
Now, we choose $\ell = \ell(\delta) =\lfloor \min\{\delta^2,1/25\} \log_2 n \rfloor$, 
and distinguish two cases depending on the value of~$\delta$. 

\noi (i) Suppose first that $\delta\in (0,1/5]$. In this case, we aim at obtaining
a value for $k_1$ that matches the lower bound. Then, in this range,
\begin{align*}
\log_2 k_1 \le \log_2 \left(\sqrt{\frac{1+\delta^2 + 8 \sqrt \delta + \epsilon} 
{\delta-2\delta^2 \log_2(1/\delta)}}+1\right)
\le 2 \log_2 (1/\delta),
\end{align*}
for any $\epsilon\in (0,1)$ and $\delta\in(0,1)$. Thus, we have 
$k_1^\ell \le n^{\delta^2 \log_2 k_1} \le n^{2 \delta^2 \log_2 (1/\delta)}.$
It follows that the right-hand side in \eqref{eq:bound_c1} above is at most 
\begin{align*}
& \sum_{i=1}^{\ell-1}  2^{i-1}k_1 
  \left(2^{1/(1-2/k_1)}\right)^{2^{i-1} k_1^2 (1-2/k_1)} 
  \left(\frac{n^{2 \delta^2 \log_2 (1/\delta)}}
  {\gamma_n^d n^{\delta} v_d(1-\epsilon)-
  n^{2\delta^2 \log_2(1/\delta) }}\right)^{2^{i-1}k_1^2(1-2/k_1)} \\
& \quad \le \sum_{i=1}^{\ell-1}  2^{i-1}k_1 
  \left(\frac{2^{1/(1-2/k_1)}}{\gamma_n^d v_d (1-2\epsilon)} 
  n^{-\delta + 2 \delta^2 \log_2 (1/\delta)} \right)^{2^{i-1}k_1^2(1-2/k_1)}\\
& \quad \le \ell 2^\ell k_1 \kappa_d^{k_1^2} 
n^{-k_1^2(1-2/k_1) (\delta-2\delta^2 \log_2(1/\delta) )}\\
& \quad \le k_1 \kappa_d^{k_1^2} \cdot n^{\delta^2} \log_2(n) \cdot 
n^{-k_1^2(1-2/k_1) (\delta-2\delta^2 \log_2(1/\delta))}
\label{eq:bound_c1f}
\end{align*}
for $n$ sufficiently large, where $\kappa_d=8/(\gamma^dv_d(1-2\epsilon))$.
Now, by our choice of $k_1$, we have 
\begin{align*}
&\delta^2 - k_1^2(1-2/k_1) (\delta - 2\delta^2 \log_2(1/\delta))\\
&\le \delta^2 - (1+\delta^2+8\sqrt \delta +\epsilon) + 
2 \sqrt{(1+\delta^2+8\sqrt \delta +\epsilon)(\delta-2\delta^2 \log_2(1/\delta))}\\
& \le 1-8\sqrt \delta - \epsilon + 2 \sqrt{12 \delta}\\
& \le 1-\epsilon,
\end{align*}
hence the probability in \eqref{eq:bound_c1} above is $o(1/n)$ and the bound in 
\eqref{eq:bound_branching} is proved for $\delta\in (0,1/5]$.

\medskip
\noi (ii) Suppose next that $\delta\in (1/5,1)$. 
In this case, the bound follows trivially from the fact that the case $\delta=1/5$ is 
covered by case (i). Indeed, we have
\begin{align*}
&\sum_{i=1}^{\ell-1}  2^{i-1}k_1 2^{2^{i-1}k_1^2} 
  \left(\frac{k_1^{\ell}}{nr_n^d v_d(1-\epsilon)-k_1^{\ell}}\right)^{2^{i-1}k_1^2-2^ik_1}\\
&\quad \le \sum_{i=1}^{\ell-1}  2^{i-1}k_1
\left(\frac{4k_1^{\ell(1/5)}}{nr_n^d v_d(1-\epsilon)-k_1^{\ell(1/5)}}\right)^{2^{i-1}k_1^2(1-2/k_1)}.
\end{align*}
It follows that, in this range also, the probability in \eqref{eq:bound_c1}
is $o(1/n)$ so that the bound \eqref{eq:bound_branching} is proved for $\delta \in (1/5,1)$.

Thus, we have shown that if $n$ is sufficiently large, then
with probability at least $1-o(1/n)$, $A$ contains at least 
$k_12^{\ell-1} \ge n^{\min\{\delta^2,1/25\}/2}$ vertices,
all within distance $\ell r_n \le \min\{\delta^2,1/25\} \log_2 n \cdot \gamma_n n^{-(1-\delta)/d}$
from $X_1$. 
In particular, the points of $A$ all fall in grid cells at most 
$\lceil r_n \ell \rceil /(r_n/(2\sqrt{d})) +1 \le 1+ 2\ell\sqrt{d}$
away from the cell containing $X_1$. 
Thus, all these vertices fall in a cube of at most $(3+4\ell\sqrt{d})^d$ cells. 
This implies that 
there must exist a cell with at least
\[
\frac{n^{\min\{\delta^2,1/25\}/2}}
{(3+4\ell\sqrt{d})^d} \ge n^{\min\{\delta^2,1/25\}/3}
\]
vertices for $n$ large enough.
\end{proof}

\section{Second growth process: searching for a dense cube II}
\label{sec:second-growth}

We now show that we can leverage Lemma~\ref{lem:first} and obtain,
still using a constant number of extra edges per vertex, 
a cell that contains a positive density of points of the connected component 
containing $X_1$. 

\begin{lemma}\label{lem:second}
Suppose event $F$ occurs.
Let $k_1,k_2$, and $r_n$ be defined as in Theorem \ref{thm:connect}.
With probability at least $1- o(1/n)$,
the connected component of $\Gamma(r_n,k_1+k_2)$ containing $X_1$ is such that
there exists a cell in the grid partition of $[0,1]^d$ into 
congruent cubes of side length $1/\lceil 2\sqrt{d}/r_n\rceil$ that contains 
at least $\alpha_d nr_n^d$  vertices of the component, where
$\alpha_d=(1-\epsilon)/(2(2\sqrt{d})^d)$.
\end{lemma}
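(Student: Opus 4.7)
The plan is to amplify the dense pocket $S_0$ furnished by Lemma~\ref{lem:first} up to a positive density within a single cell, using the $k_2$ extra neighbors per vertex as the engine of a BFS-like growth confined to that cell. Write $C$ for the dense cell of Lemma~\ref{lem:first} and $S_0$ for the intersection of the $X_1$-component of $\Gamma(r_n,k_1)$ with $C$; on the event supplied by that lemma, $|S_0|\ge n^{\min\{\delta^2,1/25\}/3}$. Because $C$ has diameter at most $r_n$, every point of $C$ is visible to every other point of $C$, so each phase-2 edge leaving a vertex in $C$ is a uniform sample from that vertex's visibility ball and may land on any point of $C$. On event $F$, the cell contains $M\ge(1-\epsilon)n(r_n/(2\sqrt{d}))^d$ points while every visibility ball contains at most $N:=(1+\epsilon)nr_n^d v_d$ points; in particular $\alpha_d nr_n^d\le M/2$, so it suffices to drive the $X_1$-component inside $C$ up to half the cell.

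I would then run a breadth-first exploration using only the phase-2 edges. Set $Q_0=S_0$ and, for $i\ge 1$, let $Q_i$ consist of those vertices of $C\setminus T_{i-1}$ (with $T_{i-1}=\bigcup_{j<i}Q_j$) that receive at least one phase-2 out-edge from a vertex of $Q_{i-1}$. Then $T_i$ is contained in the $X_1$-component of $\Gamma(r_n,k_1+k_2)$, and each vertex reveals its phase-2 edges at most once. The core estimate is that, as long as $|T_{i-1}|\le M/2$, the expected number of phase-2 edges leaving $Q_{i-1}$ and landing in $C\setminus T_{i-1}$ is at least
\[
k_2\,|Q_{i-1}|\cdot\frac{M-|T_{i-1}|}{N}\;\ge\;\frac{k_2(1-\epsilon)}{2(1+\epsilon)v_d(2\sqrt{d})^d}\,|Q_{i-1}|\;\ge\;4|Q_{i-1}|,
\]
by the definition of $k_2$ in Theorem~\ref{thm:connect}. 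A standard collision bound (each point of $C\setminus T_{i-1}$ is hit with probability approximately $1-\exp(-\text{edges}/|C\setminus T_{i-1}|)$) then shows that the expected number of \emph{distinct} new vertices is at least $\min(2|Q_{i-1}|,\,|C\setminus T_{i-1}|/2)$.

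The step I expect to be the main obstacle is upgrading this expectation bound to a high-probability doubling statement despite the without-replacement sampling of the $k_2$ phase-2 neighbors at each vertex. I would express the count of edges from $Q_{i-1}$ falling in $C\setminus T_{i-1}$ as a sum of one Bernoulli indicator per (vertex, edge-slot) pair---only weakly dependent because $k_2$ is bounded while the visibility balls have size $\Omega(nr_n^d)$---and apply a Chernoff or Hoeffding inequality for hypergeometric-type sums, together with a birthday-type union bound to control duplicates, to deduce $|Q_i|\ge \min(2|Q_{i-1}|,\,M/4)$ with failure probability at most $\exp(-c|Q_{i-1}|)$ for an absolute constant $c>0$. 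Because $|Q_{i-1}|\ge 2^{i-1}|S_0|$ at every successful round, the per-round failure probability is at most $\exp(-c\cdot 2^{i-1} n^{\min\{\delta^2,1/25\}/3})$.

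It takes at most $\lceil\log_2(M/|S_0|)\rceil=O(\log n)$ rounds for $|T_i|$ to reach $M/2\ge \alpha_d nr_n^d$. A union bound over the rounds, combined with the failure event of Lemma~\ref{lem:first}, yields total failure probability of order $\exp(-c\,n^{\min\{\delta^2,1/25\}/3})=o(1/n)$, at which point $C$ contains at least $\alpha_d nr_n^d$ vertices of the $X_1$-component of $\Gamma(r_n,k_1+k_2)$, as required.
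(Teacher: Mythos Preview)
Your proposal is correct and follows essentially the same route as the paper: start from the dense pocket $S_0$ in the cell $C$ given by Lemma~\ref{lem:first}, run a BFS-style doubling growth inside $C$ using the $k_2$ fresh phase-2 edges, show each round doubles the seed with failure probability $\exp(-c\,2^{i-1}|S_0|)$ via a Chernoff/Hoeffding bound, and union-bound over $O(\log n)$ rounds. The paper uses exactly this scheme, invoking Hoeffding's hypergeometric--binomial comparison for the concentration step; your explicit mention of a birthday-type collision bound for duplicate hits is in fact a point the paper glosses over, so your treatment is if anything slightly more careful.
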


By Lemma \ref{lem:first} we see that,
with probability at least $1- o(1/n)$,
 after $k_1$ connections per vertex,
there exists a grid cell that contains at least $n^{\min\{\delta^2,1/25\}/3}$ vertices
of the connected component containing $X_1$. Next we show that
with 
\[
k_2=\left\lceil 
   \frac{8(1+\epsilon) v_d(2\sqrt{d})^d}{(1-\epsilon)}\right\rceil
\]
new connections, the same cell contains a constant
times $nr_n^d$ vertices in the same connected component (i.e.,
a linear fraction of all points in the cell).

Consider the cell that contains the largest number of vertices in the
connected component containing $X_1$ after the first $k_1$ connections,
and let $N_0$ denote the number of such connected vertices in this cell.  
Then we have seen in Section~\ref{sec:first-growth} that
$$
\pc{N_0<  n^{\min\{\delta^2,1/25\}/3}} =o(1/n)
$$
with respect to the random choices of the first $k_1$ connections.
Suppose that the event $N_0\ge  n^{\min\{\delta^2,1/25\}/3}$ holds.

Now add $k_2$ fresh connections to each of these $N_0$ points, resulting in
$N_1\le k_2N_0$ vertices in the same grid cell that haven't been in the 
connected component of $X_1$ so far. 
If the number $N_1$ of new vertices in the cell added to the
component is less than
$2n^{\min\{\delta^2,1/25\}/3}$, we declare failure, otherwise continue by adding 
$k_2$ new 
connections to these $N_1$ points. (Note that since these $N_1$ vertices
did not belong to the component of $X_1$ before the first step, 
we have not discovered any of their connections and we may use 
$k_2$ new connections per vertex.) In this step we add $N_2 \le k_2N_1$
new vertices in the same grid cell. If $N_2 < 4n^{\min\{\delta^2,1/25\}/3}$, 
we declare failure,
otherwise continue. We repeat adding $k_2$ connections to all newly discovered
vertices until the number of connected vertices in the grid cell
$N_0+N_1+\ldots + N_i$ reaches $\alpha_dnr_n^d$ where
$\alpha_d=(1-\epsilon)/(2(2\sqrt{d})^d)$
or else
 for $L$ steps, requiring in every step $i=1,\ldots,L$ that
the number of newly discovered vertices in the same cell be at least
$2^in^{\min\{\delta^2,1/25\}/3}$. Here $L$ is chosen such that
\[
2^Ln^{\min\{\delta^2,1/25\}/3} 
\le \alpha_d nr_n^d 
< 2^{L+1}n^{\min\{\delta^2,1/25\}/3}.
\]
To estimate the probability that the process described above fails, 
observe first that at step $i$, the (conditional) probability 
 that a 
vertex selects a neighbor in the same cell is at least
\[
   \frac{2\alpha_dnr_n^d - \sum_{j=0}^{i-1}N_i}
   {(1+\epsilon)nr_n^d v_d}
\ge p_d \defeq
   \frac{\alpha_d}
   {(1+\epsilon) v_d}
\]
since on the event $F$, every grid cell has at least
$(1-\epsilon)nr_n^d/(2\sqrt{d})^d=2\alpha_dnr_n^d$ 
points and the vertex can reach at
most $(1+\epsilon)nr_n^d v_d$ points. 
In the inequality we used the fact that the number
of vertices discovered until step $i$ during the process
$\sum_{j=0}^{i-1}N_i \le (1-\epsilon)nr_n^d/(2(2\sqrt{d})^d)$
otherwise the process stops with success before step $i$.

Thus, after $i-1$ successful steps,
the expected number of newly discovered vertices at stage $i$
is at least
\[
\EXP_i N_i \ge   N_{i-1}k_2 p_d \ge 2^{i-1} n^{\min\{\delta^2,1/25\}/3}k_2 p_d
\ge 2^{i+1} n^{\min\{\delta^2,1/25\}/3}
\]
by the definition of $k_2$,
where $\EXP_i$ denotes conditional expectation given the first 
growth process and the first $i-1$ steps of the second process.
Given that the process has not failed up to step $i-1$, the conditional
probability that it fails at step $i$ is thus
\begin{align*}
  \PROB_i \big( N_i < 2^{i+1} n^{\min\{\delta^2,1/25\}/3} \big)
& \le 
  \PROB_i \big( N_i < \EXP_i N_i - 2^i n^{\min\{\delta^2,1/25\}/3} \big) \\
& \le  \exp\left(\frac{- k_2N_{i-1}p_d^2}{8}\right)\\
& \le \exp\left(\frac{- k_22^{i-1}n^{\min\{\delta^2,1/25\}/3}p_d^2}{8}\right)~,
\end{align*}
where we used the fact that conditionally, $N_i$ has a hypergeometric
distribution whose moment generating function is dominated by
that of the corresponding binomial distribution $B(k_2N_{i-1},p_d)$
(see Hoeffding \cite{Hoe63}) and we used simple Chernoff bounding
for the binomial distribution.

Thus, the probability that the process ever fails is bounded by
\begin{align*}
\sum_{i=1}^L \exp\left(\frac{- k_22^{i-1}n^{\min\{\delta^2,1/25\}/3}p_d^2}{8}\right)
& \le L\exp\left(\frac{- k_2n^{\min\{\delta^2,1/25\}/3}p_d^2}{8}\right)\\
& \le \exp\left(- n^{\min\{\delta^2,1/25\}/4}\right)
\end{align*}
for all sufficiently large $n$.

\section{Third growth process: propagating the density}
\label{sec:third-growth}

In this third step we show that, by adding a few more connections,
the connected component containing $X_1$ contains a linear fraction
of the points in {\sl every} cell of the grid partition of
$[0,1]^d$ into cubes of side length $1/\lceil 2\sqrt{d}/r_n\rceil$. 
In order to do so,
we start from the cell containing $\alpha_d nr_n^d$ vertices in
the connected component of $\Gamma(r_n,k_1+k_2)$ containing $X_1$,
and ``grow'' the component cell-by-cell until every cell has the required
number of vertices in the same connected component. We show that a constant
number of additional connections per vertex suffices.

\begin{lemma}\label{lem:third}
Suppose event $F$ defined in Lemma \ref{lem:regularity} occurs.
Let $k_1,k_2,k_3$, and $r_n$ be defined as in Theorem \ref{thm:connect}.
With probability at least $1- o(1/n)$,
the connected component of $\Gamma(r_n,k_1+k_2+k_3)$ 
containing $X_1$ is such that
every cell in the grid partition of $[0,1]^d$ into 
congruent cubes of side length $1/\lceil 2\sqrt{d}/r_n\rceil$ contains 
at least $2\alpha_d nr_n^d/(3k_3)$ vertices of the component.
\end{lemma}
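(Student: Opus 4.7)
The plan is to grow the connected component of $X_1$ cell by cell through the grid, using the fresh $k_3$ Phase~3 selections per vertex and starting from the dense cell $C_0$ furnished by Lemma~\ref{lem:second} (which contains at least $\alpha_d n r_n^d$ component-vertices). The key geometric observation is that each cell has diameter at most $r_n/2$, so any two points in grid-adjacent cells (cells sharing at least a corner) are within Euclidean distance $r_n$ and hence adjacent in $G_n(r_n)$. Consequently, every Phase~3 selection of a vertex in cell $C$ may land in any grid-adjacent cell of $C$.

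I traverse the cells in breadth-first order rooted at $C_0$. When processing a cell $C$, I expose the $k_3$ Phase~3 selections of each of $C$'s component-vertices; these selections are untouched so far, because those vertices joined the component only as selection \emph{targets} of vertices in $C$'s BFS-parent. For every grid-adjacent unprocessed cell $C'$, the vertices of $C'$ that appear among the newly exposed selections join the component. If $C$ has $M$ component-vertices, then under event $F$ the probability that a given $u \in C'$ is missed by all of them is at most $\exp(-k_3 M/((1+\epsilon) n r_n^d v_d))$, since each of the $M$ vertices has $u$ among its at most $(1+\epsilon) n r_n^d v_d$ neighbors in $G_n(r_n)$. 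Summing over $|C'| \geq 2\alpha_d n r_n^d$ gives a lower bound on the expected number of new component-vertices in $C'$; the choice $k_3 \geq \lceil\sqrt{4(1+\epsilon) v_d/\alpha_d}\rceil$ calibrates this expectation to exceed the target $2\alpha_d n r_n^d/(3k_3)$ by a constant multiplicative margin.

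Concentration is obtained from McDiarmid's bounded-differences inequality: the number of new additions in $C'$ is a function of $M$ independent Phase~3 selection sets, each of size $k_3$, with the count depending on any single such set by at most $k_3$. This yields
\[
  \PROB_c\bigl(\text{fewer than } 2\alpha_d n r_n^d/(3k_3) \text{ new component-vertices in } C'\bigr) \;\leq\; \exp(-c \, n r_n^d)
\]
for some constant $c = c(d,\epsilon) > 0$. Since $n r_n^d \geq \gamma_n^d n^\delta \to \infty$, a union bound over the $O\bigl((2\sqrt{d}/r_n)^d\bigr) = O(n^{1-\delta})$ cells gives total failure probability $O(n^{1-\delta}) \exp(-c n^\delta) = o(1/n)$, completing the proof.

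The main delicacy is to ensure that the expected number of new vertices exceeds the target by a multiplicative margin \emph{uniform in $n$ and $d$}, in particular when $M$ has dropped to the inductive threshold and the cell being processed is a ``corner'' of a BFS layer with only one already-processed grid-neighbor. In such cases one aggregates contributions from \emph{all} (up to $3^d - 1$) already-processed grid-neighbors of the cell, and orders the BFS (for instance, interior-of-layer before corner-of-layer) so that every cell, by the time it is processed, has enough accumulated mass in its processed neighborhood to preserve the margin required by the McDiarmid step.
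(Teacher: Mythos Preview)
Your plan is the same as the paper's at the level of strategy: start from the dense seed cell of Lemma~\ref{lem:second}, propagate the component cell by cell using the fresh $k_3$ selections, show each step fails with probability $e^{-\Theta(nr_n^d)}$, and union-bound over the $O(r_n^{-d})$ cells. The differences are in the traversal and the concentration tool. The paper labels the cells $1,2,\ldots,\lceil 2\sqrt d/r_n\rceil^d$ along a \emph{Hamiltonian path} of face-adjacent cells, so that every cell has exactly one predecessor; it then carries forward exactly $M:=\lfloor 2\alpha_d nr_n^d/(3k_3)\rfloor$ component vertices from cell $i$, reveals their $k_3M$ selections sequentially, and counts those landing on a not-yet-chosen vertex of cell $i{+}1$. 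Conditionally on the past, each such selection succeeds with probability at least $(2\alpha_d nr_n^d - k_3M)/((1+\epsilon)v_d nr_n^d)$, so the count stochastically dominates a binomial and a single Chernoff/Hoeffding bound gives failure probability $e^{-\eta_d nr_n^d}$ directly---no McDiarmid needed.

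The linear ordering is not cosmetic: it is precisely what your last paragraph is trying to manufacture after the fact, and that paragraph does not close the gap. Your claim that the stated $k_3$ ``calibrates this expectation to exceed the target by a constant multiplicative margin'' is correct only at the very first step from $C_0$ (which holds $\alpha_d nr_n^d$ vertices). At the inductive step, when the predecessor sits at the threshold $M=2\alpha_d nr_n^d/(3k_3)$, the exponent in your miss bound is $k_3M/((1+\epsilon)nr_n^dv_d)=2\alpha_d/(3(1+\epsilon)v_d)$, so the expected number of hits in $C'$ is of order $\alpha_d^2 nr_n^d/((1+\epsilon)v_d)$, which falls \emph{short} of the target $M$ by a factor $\Theta\bigl(k_3\alpha_d/((1+\epsilon)v_d)\bigr)=\Theta\bigl(\sqrt{\alpha_d/((1+\epsilon)v_d)}\bigr)\ll 1$. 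Aggregating over all $3^d-1$ processed grid-neighbours cannot repair this for general $d$, since $\alpha_d^{-1/2}\asymp (2\sqrt d)^{d/2}$ eventually dominates $3^d$; your proposed BFS reordering therefore does not salvage the margin. Replacing the BFS by a Hamiltonian path through face-adjacent cells, as the paper does, removes the multi-predecessor bookkeeping entirely and lets the one-step estimate carry the whole induction.
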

\begin{proof}
Suppose the event described in Lemma \ref{lem:second} holds so that
there exists a cell with $\alpha_d nr_n^d$ vertices in the
connected component of $X_1$. Label this cell by $1$.
Next label all the cells from $1$ to $\lceil 2\sqrt{d}/r_n\rceil^d$
in such a way that cells $i$ and $i+1$ are adjacent (i.e., they share a
$d-1$-dimensional face), for all $i=1,\ldots,\lceil 2\sqrt{d}/r_n\rceil^d-1$.
Note that the size of the cells was chosen such that for any $x$ in cell $i$ 
the ball $B(x,r)$ contains entirely cell $i+1$. In particular, every point 
in cell $i+1$ is a potential neighbor of a point in cell $i$.

Let $k_3=\lceil \sqrt{4(1+\epsilon)v_d/\alpha_d}\rceil$ and 
consider the following process. Select, arbitrarily, 
$$M := \left\lfloor\frac{2\alpha_d nr_n^d}{3k_3}\right\rfloor$$ 
of the already connected vertices in cell $1$ and select, one-by-one, 
$k_3$ new neighbors
for each until $M$ newly connected
vertices have been found in cell $2$.
Declare failure if the number of newly connected vertices
in cell $2$ is less than $M$ after revealing all possible $k_3M$
new  connections.
Otherwise continue
and select $k_3$ new neighbors of the new vertices in cell $2$. 
Visit all cells in a sequential fashion, always stopping 
the selection when $M$ new vertices are connected in 
the next cell. If the process succeeds, then $X_1$ is connected to 
at least $M$ points in every cell, which is a positive proportion.

To analyze the probability of failure, note that for 
each new connection, the probability of discovering a new vertex
in cell $i+1$ by a vertex in cell $i$ is at least
\[
\frac{2\alpha_dnr_n^d - k_3M}{(1+\epsilon)v_d nr_n^d}
\]
since (assuming the event $F$) there are at least $2\alpha_dnr_n^d$
vertices in cell $i+1$, at most $(1+\epsilon)v_d nr_n^d$ within
radius $r_n$ of the point whose neighbors we select, and we discard
at most $k_3M$ points cell $i+1$ that may have been 
already be chosen by other vertices in cell $i$.

Thus, the expected number
of newly discovered vertices in cell $i+1$, conditionally on the
fact that the process has not failed earlier, is at least
\[
  k_3\alpha_d nr_n^d \cdot 
  \frac{2\alpha_dnr_n^d - k_3M}{(1+\epsilon)v_d nr_n^d}
  \ge 
  k_3\alpha_d nr_n^d \frac{\alpha_d}{(1+\epsilon)v_d}
 \ge 2M~,
\]
for all $n$ large enough, where the last inequality follows from our choice of $k_3$.
Thus, the probability of failure in step $i$ is, by a similar
argument as in the proof of Lemma \ref{lem:second}, bounded by $e^{-\eta_d nr_n^d}$, 
where
\[
\eta_d = \frac{\alpha_d^3}{12k_3(1+\epsilon)^2 v_d^2}~.
\]
Hence, the probability that the process ever fails in any step is $o(1/n)$ and this 
completes the proof.
\end{proof}

\section{Final step: proof of Theorem~\ref{thm:connect}}\label{sec:final-growth}

With a giant component densely populating every cell of the grid, it is now
easy to show that with at most one extra connection,
the entire graph becomes connected, with high probability.
The argument is as follows.

Start with growing the component of $X_1$ as in the growth processes
described above. Then, with $k_1+k_2+k_3$ connections per vertex,
the component has the property described in Lemma~\ref{lem:third},
with probability $1-o(1/n)$. Now consider points $X_2,X_3,\ldots,X_n$,
one-by-one. If $X_2$ does not belong to the connected component of $X_1$,
then grow the same process starting from $X_2$ until the component
of $X_2$ hits the one of $X_1$. If the two components do not connect, then 
this new component also satisfies the property of Lemma \ref{lem:third}, with
probability $1-o(1/n)$.
(Note that until the two components meet, all connections are new in the
sense that they have not been revealed in the first process, and therefore
these events hold independently.) Now we may continue, taking  
every $X_i$, $i=3,\ldots,n$. If $X_i$ does not belong to any of the 
previously grown components, then we grow a new component starting from $X_i$
until it hits one of the previous ones, or else until the component
has at least $2\alpha_d nr_n^d/(3k_3)$ vertices in every cell of the grid.
By the union bound, with probability $1-o(1)$, every vertex is contained
in such a giant component. 

If there is more than one connected component at the end of the process, then 
we add one more connection to every vertex not belonging to the
component of $X_1$. Note that each such component has at least 
$n/(3k_3)$ vertices and for every vertex, the probability that it hits
the component of $X_1$ is at least $1/(3k_3)$, so the probability
that all new connections miss the component of $X_1$ is at most
$(3k_3)^{-n/3k_3}$. Thus, by the union bound, with high probability, 
all components connect to the first one and the entire graph
becomes connected. This concludes the proof of Theorem \ref{thm:connect}.

\section*{Acknowledgement}
Part of this work has been conducted while attending a workshop at the Banff International Research Station for the workshop on Models of Sparse Graphs and Network Algorithms. 

\setlength{\bibsep}{.3em}
\bibliographystyle{plainnat}
\bibliography{connectivity}

\end{document}